\newcommand{\Z}{\mathbb{Z}}
\author{Nick Ramsey \thanks{nramsey@depaul.edu}}
\institution{DePaul University}
\title{Perturbing Subshifts of Finite Type}
\begin{document}
\maketitle
\tableofcontents

\theoremstyle{plain}
\newtheorem{theorem}{Theorem}
\newtheorem{lemma}{Lemma}
\newtheorem{corollary}{Corollary}
\newtheorem{proposition}{Proposition}
\theoremstyle{definition}
\newtheorem{remark}{Remark}
\newtheorem{definition}{Definition}
\newtheorem{question}{Question}
\newtheorem{myexample}{Example}

\section{Introduction}
Let $n$ be a positive integer and let $T$ be an irreducible $n\times n$ matrix with entries in $\{0,1\}$.  This determines a subshift of finite type $\Sigma$ defined as the collection of all bi-infinite strings $(x_i)$ on an alphabet of $n$ symbols (indexing the rows and columns of $T$) that are {\it admissible} in the sense that  the $(x_{i+1},x_i)$ entry in $T$ is equal to $1$ for all $i\in\Z$.  The goal of this note is to explore the entropy $h(\Sigma)$ of the shift map on $\Sigma$ and how it is affected by perturbations obtained by forbidding various finite words from occurring in $\Sigma$. Recall that the largest eigenvalue of $T$ is given by $\lambda = e^{h(\Sigma)}$, and we assume throughout that $\lambda >1$. 

Let $S$ denote a finite set of finite admissible nonempty words none of which contains another, and let $\Sigma\langle S\rangle$ denote the subshift of $\Sigma$ consisting of those elements of $\Sigma$ in which none of the words in $S$ appear.  In \cite{lind}, Lind addressed the problem of bounding the entropy of $\Sigma\langle S \rangle$ in case $S$ consists of a single word. He proved that the entropy of $\Sigma\langle S \rangle$ approaches that of $\Sigma$ as the length $\ell$ of the word tends to infinity, and showed moreover that the difference in entropy is of order $\lambda^{-\ell}$. In \cite{psft}, the author adapted Lind's method to the case where $S$ has more than one word, and in particular introduced a certain determinant of correlation polynomials whose size is closely tied to the entropy perturbation.  Analyzing the size of this determinant gets complicated as $S$ grows, and we were only able to effectively bound the entropy and show that it approximates that of $\Sigma$ well in case $S$ consists of two words of  length tending to infinity. The entropy perturbation in this case is shown to be of order at most $\lambda^{-\ell}$ where $\ell$ is the length of the shorter word. 

In \cite{miller}, Miller introduced a different approach to the following related problem: given a finite set $S$ of finite nonempty words in an alphabet, determine whether there exists a bi-infinite word in the alphabet that avoids $S$.  Note that the ambient shift here is constrained to the full shift, while the set $S$ is quite flexible.  In this note, we adapt Miller's method to a general subshift $\Sigma$ and refine it to get lower bounds on the entropy of the perturbations $\Sigma\langle S\rangle.$  As in \cite{miller}, we define $$p(t) = \sum_{\tau\in S}t^{|\tau|}$$ 

\begin{theorem}\label{main}
	There exists a constant $C$ depending only on $\Sigma$ such that if $k$ is a positive integer, every element of $S$ has length at least $k$, and there exists $t\in (1,\lambda^k)$ with $$r = \frac{1+kC\lambda^{2k}p(t^{1/k}/\lambda)}{t} <1$$ then $$h(\Sigma) - h(\Sigma\langle S\rangle) \leq -\frac{\log(1-r)}{k}$$
\end{theorem}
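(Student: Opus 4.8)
The plan is to estimate a weighted count of the admissible words of $\Sigma$ that avoid $S$, and to convert the asserted entropy inequality into a lower bound on the growth rate of that count. Let $v$ be a left Perron eigenvector of $T$, normalised so that $\min_i v_i=1$, and put $C_0=\max_i v_i$, $V=\sum_i v_i$ (constants depending only on $\Sigma$); the identity $v^{\mathrm{tr}}T=\lambda v^{\mathrm{tr}}$ says precisely that for every admissible word $w$ the sum of $v_a$ over the symbols $a$ that may legally follow $w$ equals $\lambda\,v_{\mathrm{last}(w)}$. For $N\ge 1$ let $B_N$ be the sum of $v_{\mathrm{last}(w)}$ over the admissible words $w$ of length $N$ avoiding $S$, and set $B_0=V/\lambda$. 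Since each weight lies in $[1,C_0]$, the number of such words is between $B_N/C_0$ and $B_N$, and since $h(\Sigma\langle S\rangle)$ is the exponential growth rate of the number of $S$-avoiding admissible words of $\Sigma$ (a standard fact about subshifts of finite type: these words have the same exponential growth rate as the language of $\Sigma\langle S\rangle$), it suffices to produce $\rho\in(0,1]$ with $B_{mk}\ge(\rho\lambda^{k})^{m}B_0$ for all $m\ge 0$, for then $h(\Sigma)-h(\Sigma\langle S\rangle)\le-\frac1k\log\rho$. I will take $\rho=\max(1-r,\,1/t)$; since $\rho\ge 1-r$ this gives the theorem.

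The engine is a recursion for $B_N$. An admissible word of length $N+1$ avoids $S$ exactly when it is an $S$-avoiding word $w$ of length $N$ followed by a legal symbol $a$ such that no $\tau\in S$ is a suffix of $wa$ (any newly created occurrence of a forbidden word must be a suffix); the weighted total of all legal one-symbol extensions of the $S$-avoiding words of length $N$ being $\lambda B_N$, this yields the exact identity $B_{N+1}=\lambda B_N-D_N$, where $D_N$ weights the pairs $(w,a)$ that do create a forbidden suffix. Deleting that suffix $\tau$ from $wa$ leaves an $S$-avoiding word of length $N+1-|\tau|$ and contributes the fixed weight $v_{\mathrm{last}(\tau)}\le C_0$, so $D_N\le C_0\sum_{\tau\in S}B_{N+1-|\tau|}$ (the hypothesis that no member of $S$ contains another is merely a convenient normalisation here). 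Now iterate $k$ times — $B_{N+k}=\lambda^{k}B_N-\sum_{j=0}^{k-1}\lambda^{j}D_{N+k-1-j}$ — insert the bound on each $D$, and, using $B_j\le\lambda B_{j-1}$, replace each $B_j$ with $j$ not a multiple of $k$ by a power of $\lambda$ times the nearest $B_{m'k}$ below it. Because every $|\tau|\ge k$, all indices that arise lie $\le mk$, and writing $|\tau|=a_\tau k+b_\tau$ with $b_\tau\in\{1,\dots,k\}$, $a_\tau=\lceil|\tau|/k\rceil-1\ge 0$, one reaches, with $N=mk$,
\[
B_{(m+1)k}\ \ge\ \lambda^{k}B_{mk}\ -\ C_0\!\sum_{\tau\in S}\Big[(k-b_\tau+1)\,\lambda^{k-b_\tau}B_{(m-a_\tau)k}\ +\ (b_\tau-1)\,\lambda^{2k-b_\tau}B_{(m-a_\tau-1)k}\Big].
\]

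I then prove $B_{(m+1)k}\ge\rho\lambda^{k}B_{mk}$ for all $m\ge 0$ by induction. Assuming it for all smaller indices gives $B_{(m-c)k}\le(\rho\lambda^{k})^{-c}B_{mk}$; since $\rho\ge 1/t$ we have $\rho\lambda^{k}>1$ and $\rho^{-1}\le t$. The crucial cancellation is $\lambda^{k-b_\tau}(\rho\lambda^{k})^{-a_\tau}=\lambda^{k-|\tau|}\rho^{-a_\tau}\le\lambda^{k-|\tau|}t^{a_\tau}\le\lambda^{k}(t^{1/k}/\lambda)^{|\tau|}$ (using $a_\tau<|\tau|/k$), and likewise $\lambda^{2k-b_\tau}(\rho\lambda^{k})^{-a_\tau-1}\le t\lambda^{k}(t^{1/k}/\lambda)^{|\tau|}$; as $k-b_\tau+1\le k$ and $b_\tau-1\le k$, the error term is at most $2C_0\,kt\,\lambda^{k}\,p(t^{1/k}/\lambda)\,B_{mk}<2C_0\,k\,\lambda^{2k}\,p(t^{1/k}/\lambda)\,B_{mk}$, the last step because $t<\lambda^{k}$. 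Take $C=2C_0$ (or any larger $\Sigma$-constant). The hypothesis $r<1$, i.e.\ $kC\lambda^{2k}p(t^{1/k}/\lambda)<t-1$, then forces $2C_0\,k\,\lambda^{2k}p(t^{1/k}/\lambda)\le\min\!\big(r,\tfrac{t-1}{t}\big)\lambda^{k}=(1-\rho)\lambda^{k}$ (compare the left side against $r$ and against $(t-1)/t$ separately, using $t<\lambda^{k}$ and $t>1$), whence $B_{(m+1)k}\ge\rho\lambda^{k}B_{mk}$. The base case $m=0$ reduces to $B_k\ge\rho\lambda^{k-1}V$, which follows from $B_k\ge\lambda^{k-1}V-C_0\,\#\{\tau\in S:|\tau|=k\}$ together with the same inequality $1-\rho\ge 2C_0\,k\,\lambda^{k}p(t^{1/k}/\lambda)\ge 2C_0\,\#\{\tau\in S:|\tau|=k\}$ and $\lambda^{k-1}V\ge 1$. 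Hence $B_{mk}\ge(\rho\lambda^{k})^{m}B_0$, the growth rate of $B_N$ is at least $\lambda\rho^{1/k}$, and $h(\Sigma)-h(\Sigma\langle S\rangle)\le-\frac1k\log\rho\le-\frac1k\log(1-r)$.

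The clean parts here are the observation that a new forbidden occurrence is always a suffix — so $D_N$ is bounded by deleting it — and the use of the Perron weighting, which makes one-step extension counts exactly $\lambda$ times the previous weight rather than merely comparable to it. The main obstacle, needing the most care, is the error estimate: the iteration and the re-indexing to block-aligned positions must be arranged so that the relevant powers of $\lambda$ collapse (via the exact cancellation $\lambda^{k-b_\tau}\lambda^{-ka_\tau}=\lambda^{k-|\tau|}$) before the bounds $\rho^{-1}\le t$ and $t<\lambda^{k}$ bring the total cost up to $\lambda^{2k}$; and one must check that the particular choice $\rho=\max(1-r,1/t)$ — rather than $\rho=1-r$ alone — is what lets the final inequality close off using only $r<1$ and $t\in(1,\lambda^{k})$.
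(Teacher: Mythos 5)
Your argument is correct, but it reaches Theorem \ref{main} by a genuinely different route than the paper. The paper works measure-theoretically: it introduces the Parry measure, a weight function $w(\sigma)$ recording the expected generating function of imminent completions of forbidden words, proves a one-step averaging identity and an inductive lower bound $\mu(G(\sigma,m))\geq (1-r)^m\mu([\sigma])$ for ``good'' cylinders, and then handles the blocking by literally recoding: it passes to the block shift $\Sigma_k$, converts $S$ into a set $S_k$ of block-words by counting the tilings of each $\tau$ by $k$-blocks, and bounds $p_k(t/\lambda^k)\leq kB\lambda^{2k}p(t^{1/k}/\lambda)$ before reapplying the unblocked estimate (constant $C=DA^{-1}B$). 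You instead avoid both the measure and the recoding: your Perron-eigenvector weighting of word counts plays the role of the Parry measure, the exact recursion $B_{N+1}=\lambda B_N-D_N$ with the ``new occurrence must be a suffix'' observation replaces the averaging lemma, and your $k$-fold iteration with the split into $(k-b_\tau+1)\lambda^{k-b_\tau}$ and $(b_\tau-1)\lambda^{2k-b_\tau}$ terms is the combinatorial shadow of the paper's tiling count, producing the same $k\lambda^{2k}p(t^{1/k}/\lambda)$ loss with the constant $2\max_i v_i/\min_i v_i$ in place of $DA^{-1}B$ (both depend only on $\Sigma$, as required); the device $\rho=\max(1-r,1/t)$ neatly closes the induction using only $r<1$ and $t\in(1,\lambda^k)$. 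What each approach buys: yours is more elementary and self-contained (pure weighted counting, no block alphabet, no measure), while the paper's weight function gives a local statement relative to an arbitrary $S$-free base word $\sigma$ (hence extendability of good words and a reusable structural lemma), and its $\Sigma_k$ formalism isolates the blocking step cleanly. Two small points you should make explicit: terms $B_i$ with $i<0$ in your block-aligned inequality must be read as zero (they come from $\tau$ too long to fit, which contribute nothing to the relevant $D$'s), and your final step converts a count of $S$-avoiding $\Sigma$-admissible (i.e.\ locally admissible) words into a lower bound on $h(\Sigma\langle S\rangle)$, which uses the standard one-dimensional SFT fact that locally and globally admissible words have the same exponential growth rate --- a step the paper also takes implicitly when it turns its count of good words into an entropy bound, so you are on no weaker footing there.
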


This theorem is shown to have consequences for Lind-type problems. A perturbation bound of $O(\lambda^{-\ell})$ seems beyond the method, but we can establish $O(\ell^{-3/4}\lambda^{-\ell/8})$ when $S$ consists of any fixed number of words of minimal length $\ell$. The method can also be applied to a growing set of words of increasing length with sufficient control over the growth, as illustrated by the following result. Suppose that $S_1, S_2, \dots$ is a sequence of sets as above and let $\ell_i$ denote the minimal length in $S_i$. 
\begin{theorem}\label{notmain}
	Suppose there exists $\kappa<\lambda$ such that $|S_i| = O(\kappa^{\ell_i})$ as $i\to \infty$. Then $$h(\Sigma) = \lim_{i\to\infty}h(\Sigma\langle S_i\rangle)$$
\end{theorem}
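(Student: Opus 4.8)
The plan is to deduce this directly from Theorem~\ref{main} by choosing the free parameters $k$ and $t$ well for each index $i$. The obstruction to a naive application is that Theorem~\ref{main} forces $k \le \ell_i$ (every word of $S_i$ must have length at least $k$), so the factor $\lambda^{2k}$ in the definition of $r$ is only bounded by $\lambda^{2\ell_i}$, which is far too large to be absorbed by $|S_i| = O(\kappa^{\ell_i})$. The key idea is to instead take $k$ to be a \emph{small fixed fraction} of $\ell_i$, say $k \asymp \alpha\ell_i$ with a constant $\alpha < \tfrac12$: this replaces $\lambda^{2k}$ by $\lambda^{2\alpha\ell_i}$, while $p$ evaluated at $t^{1/k}/\lambda$ (a number in $(0,1)$ once $t<\lambda^k$) is still at most $|S_i|$ times $(t^{1/k}/\lambda)^{\ell_i} \le t^{1/\alpha}\lambda^{-\ell_i}$, since every word has length at least $\ell_i$. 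The exponent $2\alpha - 1$ on $\lambda$ that survives is negative, and the surviving rate $\kappa\lambda^{2\alpha-1}$ can be made $<1$ precisely because $\kappa < \lambda$.

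Concretely, I would proceed as follows. We may assume $\ell_i \to \infty$ (this is what ``increasing length'' refers to, and it is genuinely needed — a constant sequence $S_i=\{w\}$ with $w$ a short word shows the limit can fail otherwise). Fix once and for all a constant $\alpha$ with $0 < \alpha < \tfrac12\bigl(1 - \tfrac{\log\kappa}{\log\lambda}\bigr)$, the interval being nonempty since $\kappa<\lambda$ and $\lambda>1$. Set $k_i = \lceil \alpha\ell_i\rceil$ and take $t = 2$ (any constant exceeding $1$ works). For all large $i$ one checks the hypotheses of Theorem~\ref{main}: $k_i \le \ell_i$, so every word of $S_i$ has length $\ge k_i$; and $2 < \lambda^{k_i}$, so $t \in (1,\lambda^{k_i})$. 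Using $k_i \ge \alpha\ell_i$ one gets $(t^{1/k_i}/\lambda)^{\ell_i} \le t^{1/\alpha}\lambda^{-\ell_i}$, hence $p(t^{1/k_i}/\lambda) \le |S_i|\,t^{1/\alpha}\lambda^{-\ell_i}$; combining this with $\lambda^{2k_i} \le \lambda^2\lambda^{2\alpha\ell_i}$ and $|S_i| = O(\kappa^{\ell_i})$ gives
\[
k_i\,C\,\lambda^{2k_i}\,p(t^{1/k_i}/\lambda) = O\!\bigl(\ell_i\,(\kappa\lambda^{2\alpha-1})^{\ell_i}\bigr),
\]
and $\kappa\lambda^{2\alpha-1} < 1$ by the choice of $\alpha$, so this quantity tends to $0$. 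Therefore $r_i = (1+o(1))/2 \to \tfrac12 < 1$, and Theorem~\ref{main} applies for all large $i$, yielding
\[
0 \;\le\; h(\Sigma) - h(\Sigma\langle S_i\rangle) \;\le\; -\frac{\log(1-r_i)}{k_i},
\]
where the lower bound holds because $\Sigma\langle S_i\rangle$ is a subsystem of $\Sigma$.

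To finish, observe that since $r_i \to \tfrac12$ the numerator $-\log(1-r_i)$ converges to $\log 2$, hence stays bounded, while $k_i \ge \alpha\ell_i \to \infty$; so the right-hand side above tends to $0$, and the squeeze theorem gives $\lim_{i\to\infty} h(\Sigma\langle S_i\rangle) = h(\Sigma)$. I expect the only substantive step to be the one already isolated: recognizing that $k$ should scale like a fraction of $\ell_i$ and pinning down the admissible range of $\alpha$ via the inequality $\kappa\lambda^{2\alpha-1}<1$ that balances the ambient growth rate $\lambda$ against the word-count growth rate $\kappa$. Everything after that — the verification of the hypotheses of Theorem~\ref{main}, the ceiling-function estimates, and the harmless constant choice of $t$ — is routine bookkeeping; one could optimize $t$ and $\alpha$ to extract a decay rate for $h(\Sigma)-h(\Sigma\langle S_i\rangle)$, but for the stated limit this is unnecessary.
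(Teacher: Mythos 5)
Your argument is correct, and it follows the paper's route in its essentials: both proofs deduce Theorem~\ref{notmain} from Theorem~\ref{main} by blocking at scale $k\asymp\alpha\ell_i$, bounding $p(t^{1/k}/\lambda)\le |S_i|(t^{1/k}/\lambda)^{\ell_i}$, and choosing $\alpha$ exactly so that $\kappa<\lambda^{1-2\alpha}$, which is the same balance condition you isolate as $\kappa\lambda^{2\alpha-1}<1$. The one genuine difference is the treatment of $t$: the paper minimizes $r$ over $t$, taking $t=t_{\mathrm{min}}$ (and checking that it lies in $(1,\lambda^k)$ via the criterion $1<(\ell-k)C|S|<\lambda^{\ell(1-2\alpha)}$), which drives $r_{\mathrm{min}}\to 0$ and yields an exponentially decaying bound $h(\Sigma)-h(\Sigma\langle S_i\rangle)=O\bigl((\kappa/\lambda^{1-2\alpha})^{\alpha\ell_i}\ell_i^{\alpha-1}\bigr)$; you instead fix $t=2$, so $r_i\to\tfrac12$ rather than $0$, and you rely on the division by $k_i\to\infty$ to force the bound to $0$, which gives only an $O(1/\ell_i)$ decay. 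Your choice buys simplicity — no optimization in $t$ and no admissibility check for $t_{\mathrm{min}}$ — while the paper's buys the quantitative rate; for the stated limit both suffice, and your explicit remark that $\ell_i\to\infty$ must be assumed (and is implicit in the paper's ``for $\ell$ sufficiently large'') is a fair reading of the hypothesis.
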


\section{Parry measure and the weight function $w(\sigma)$}

Let $u$ and $v$ denote left and right $\lambda$-eigenvectors for $T$ normalized so that $u^t v=1$. The entries of these vectors measure the prominence of the corresponding symbols as a sink and source in $\Sigma$, respectively. More precisely, $u_i/\sum u_i$ is the fraction of paths on the directed graph associated to $T$ that terminate at $i$, while $v_i/\sum v_i$ is the fraction of paths that begin at $i$. Let $\mu$ denote Parry measure on $\Sigma$. This is a shift-invariant measure of maximal entropy and can be characterized on cylinder sets by $$\mu([ix_1x_2\cdots x_{k-1}j]) = \frac{u_iv_j}{\lambda^k}$$ The notation $[\sigma]$ only defines a cylinder set up to shifts. The shift-independence of $\mu$ often, as here, renders this ambiguity moot. Where it is important to have an actual set to work with (\emph{e.g.} in defining $f_\sigma$ below) we take $\sigma$ to begin at coordinate $0$ in forming $[\sigma]$. If $\sigma$ fails to be admissible, then $[\sigma]$ is taken to be empty. 

Let $N(\sigma,k)$ denote the number of words $\eta$ of length $k$ such that $\sigma\eta$ is admissible. Since $T$ is irreducible, there exist positive constants $A, B$ such that 
\begin{equation}\label{wordbound}
A\lambda^k\leq N(\sigma,k)\leq B\lambda^k
\end{equation}
 for all words $\sigma$ and all $k\geq 1$. Let $D$ denote the maximum ratio among the $v_i$.

\begin{lemma}\label{mulemma}
We have $$\mu([\sigma\tau])\leq DA^{-1}\lambda^{-|\tau|}\mu([\sigma])$$ for all words $\sigma, \tau$. 
\end{lemma}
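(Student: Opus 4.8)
The plan is to combine the explicit cylinder formula for Parry measure with the counting bound \eqref{wordbound}. If $\sigma\tau$ is inadmissible then $[\sigma\tau]=\emptyset$ and there is nothing to prove, so I may assume $\sigma$ and $\tau$ (hence $\sigma\tau$) are admissible; writing $m=|\tau|$, I may also assume $m\ge 1$. The starting point is the partition
\[
[\sigma]\;=\;\bigsqcup_{\eta}[\sigma\eta],
\]
where $\eta$ ranges over the words of length $m$ with $\sigma\eta$ admissible: any point of $[\sigma]$ specifies the $m$ symbols immediately following $\sigma$, and then $\sigma\eta$ is a factor of a point of $\Sigma$, hence admissible; conversely each $[\sigma\eta]$ is a subset of $[\sigma]$, and distinct $\eta$ give disjoint cylinders. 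By definition there are exactly $N(\sigma,m)$ such $\eta$.

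Next I would evaluate the two cylinder masses. Let $a$ be the first symbol of $\sigma$ and $c$ the last symbol of $\tau$. Every word $\sigma\eta$ occurring above has first symbol $a$, length $|\sigma|+m$, and last symbol equal to the last symbol $\eta^-$ of $\eta$, so the displayed formula gives $\mu([\sigma\eta])=u_a v_{\eta^-}\lambda^{-(|\sigma|+m-1)}$. Summing over $\eta$ and applying \eqref{wordbound},
\[
\mu([\sigma]) \;=\; u_a\lambda^{-(|\sigma|+m-1)}\sum_{\eta}v_{\eta^-} \;\ge\; u_a\lambda^{-(|\sigma|+m-1)}\Bigl(\min_i v_i\Bigr)N(\sigma,m) \;\ge\; u_a\lambda^{-(|\sigma|+m-1)}\Bigl(\min_i v_i\Bigr)A\lambda^{m}.
\]
On the other hand $\sigma\tau$ has first symbol $a$, last symbol $c$, and length $|\sigma|+m$, so $\mu([\sigma\tau])=u_a v_c\lambda^{-(|\sigma|+m-1)}\le u_a\bigl(\max_i v_i\bigr)\lambda^{-(|\sigma|+m-1)}$. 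Dividing, the common factors $u_a$ and $\lambda^{-(|\sigma|+m-1)}$ cancel and I obtain
\[
\frac{\mu([\sigma\tau])}{\mu([\sigma])}\;\le\;\frac{\max_i v_i}{\min_i v_i}\cdot\frac{1}{A}\cdot\lambda^{-m}\;=\;\frac{D}{A}\,\lambda^{-m},
\]
since $D$ is the largest ratio among the $v_i$; as $m=|\tau|$ this is exactly the claimed bound.

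I do not anticipate a genuine obstacle. The one combinatorial point to get right is that $[\sigma]$ breaks into $N(\sigma,m)$ sub-cylinders $[\sigma\eta]$, each carrying the \emph{same} factor $u_a\lambda^{-(|\sigma|+m-1)}$ and differing only in the final-symbol weight $v_{\eta^-}$; once that is in hand, \eqref{wordbound} pins the total mass of $[\sigma]$ from below and the spread of the $v_i$ controls $\mu([\sigma\tau])$ from above. It is also worth noting that every word appearing ($\sigma\eta$ and $\sigma\tau$) has length $|\sigma|+m\ge 2$, so the displayed cylinder formula is used only in the range where it was stated, and that $\min_i v_i>0$ since $T$ is irreducible, so no degenerate case arises.
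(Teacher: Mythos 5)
Your proof is correct and is essentially the paper's argument: both decompose $[\sigma]$ into the $N(\sigma,|\tau|)$ sub-cylinders $[\sigma\eta]$, invoke the lower bound $N(\sigma,|\tau|)\geq A\lambda^{|\tau|}$ from (\ref{wordbound}), and use the spread $D$ of the $v_i$ to compare $\mu([\sigma\tau])$ with the $\mu([\sigma\eta])$ — you simply make the last step explicit via the cylinder formula (cancelling $u_a$ and the common power of $\lambda$) where the paper phrases it as ``$D$ is the maximum ratio among the $\mu([\sigma\eta])$.'' The only caveat, that your argument presumes $\sigma$ nonempty (you take its first symbol $a$), is an implicit restriction shared by the paper's own proof, so it does not distinguish your attempt from theirs.
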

\begin{proof}
The explicit description of $\mu$ on cylinder sets implies that $D$ is also the maximum ratio among the $\mu([\sigma\eta])$ as $\eta$ varies among words of a given positive length. Thus
\begin{eqnarray*}
	\lambda^{|\tau|}\mu([\sigma\tau]) \leq A^{-1}N(\sigma,|\tau|)\mu([\sigma\tau]) &=& A^{-1}\sum_{|\eta| =|\tau|}\mu([\sigma\tau]) \\ & \leq& A^{-1}\sum_{|\eta| = |\tau|}D\mu([\sigma\eta]) \\ &=& DA^{-1}\mu([\sigma])
\end{eqnarray*}
\end{proof}

Let $\sigma$ be an admissible word and let $[\sigma]$ denote the associated cylinder set. Define a polynomial-valued function $f_\sigma$ on $[\sigma]$ by $$f_\sigma(\alpha) =  \sum_{\tau\in S}\sum_j t^j$$ where the inner sum is over $j\geq 0$ such that $\tau$ occurs in $\alpha$ beginning within $\sigma$ and ending $j$ symbols beyond the end of $\sigma$. Observe that the function $f_\sigma$ is locally constant on $[\sigma]$. 
\bigskip
\begin{center}
\begin{tikzpicture}
	\def\sc{.4}
	\def\bs{0.8*\sc}
	\def\ss{0.2*\sc}
	\draw (-\sc,0) rectangle (-\sc+\bs,\bs);	
	\draw (0,0) rectangle (\bs,\bs);
	\draw (\sc,0) rectangle (\sc+\bs,\bs);
	\draw (2*\sc,0) rectangle (2*\sc+\bs,\bs);	
	\draw (3*\sc,0) rectangle (3*\sc+\bs,\bs);	
	\draw (4*\sc,0) rectangle (4*\sc+\bs,\bs);	
	\draw (5*\sc,0) rectangle (5*\sc+\bs,\bs);	
	\draw (6*\sc,0) rectangle (6*\sc+\bs,\bs);	
	\draw (7*\sc,0) rectangle (7*\sc+\bs,\bs);		
	\draw (0,-.1) -- (0,-.2) -- (4*\sc-\ss,-.2) -- (4*\sc-\ss,-.1);
	\draw (2*\sc,.5) -- (2*\sc,.6) -- (6*\sc+\bs,.6) -- (6*\sc+\bs,.5);
	\draw (.8,-.4) node {$\sigma$};
	\draw (1.8,.8) node {$\tau$};
	\draw (2.2,-.4) node {$\underbrace{\hspace{30pt}}_j$};
	\draw (-.7,.16) node {$\cdots$};
	\draw (3.5,.16) node {$\cdots$};
	\draw (-1.5,.17) node {$\alpha=$};
\end{tikzpicture}
\end{center}
\bigskip
We define a weight function on admissible words by $$w(\sigma) = \frac{1}{\mu([\sigma])}\int_{[\sigma]}f_\sigma$$ Note that the empty word $\sigma_0$ has cylinder set $[\sigma_0]=\Sigma$ and weight $0$. Since no element of $S$ contains another, for each $j\geq 0$ there is at most one element of $S$ that ends $j$ symbols after then of $\sigma$, so we may write 
\begin{equation}\label{explicit}
w(\sigma) = \sum_{j\geq 0} \frac{\mu(S_{\sigma,j})}{\mu([\sigma])} t^j
\end{equation}
where $S_{\sigma,j}$ denotes the subset of $[\sigma]$ containing an element $\tau\in S$ that begins in $\sigma$ and ends $j$ symbols after the end of $\sigma$. Observe that if $\sigma$ ends in an element of $S$, then we have $S_{\sigma,0}=[\sigma]$ and hence $w(\sigma)\geq 1$. In general, the weight $w(\sigma)$ is a measure of how close $\sigma$ is to ending in an element of $S$. The strategy here is study how $w$ changes as you extend $\sigma$ to the right by computing its weighted averages, and then use the results to bound from below the number of $S$-free extensions of $\sigma$ and ultimately the entropy of $\Sigma\langle S\rangle$. 

Define $$p_\sigma = \sum_{\tau\in S}\frac{\mu([\sigma\tau])}{\mu([\sigma])}t^{|\tau|}$$ Lemma \ref{mulemma} furnishes the upper bound \begin{equation}\label{pestimate}
p_\sigma \leq DA^{-1}\sum_{\tau\in S}(t/\lambda)^{|\tau|} = DA^{-1}p(t/\lambda)
\end{equation}
which is independent of $\sigma$. 

\begin{lemma}
	If $\sigma$ does not end in an element of $S$, then $$\frac{1}{\mu([\sigma])}\sum_i \mu([\sigma i])w(\sigma i) = \frac{w(\sigma)+p_\sigma}{t}$$
\end{lemma}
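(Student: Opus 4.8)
The plan is to split the cylinder $[\sigma]$ along the coordinate immediately following $\sigma$ and compare $f_{\sigma i}$ with $f_\sigma$ on each piece. First I would record the disjoint decomposition $[\sigma]=\bigsqcup_i[\sigma i]$, the union running over symbols $i$ with $\sigma i$ admissible; this is legitimate because every $\alpha\in[\sigma]$ has a well-defined symbol at coordinate $|\sigma|$. Consequently $\int_{[\sigma]}F=\sum_i\int_{[\sigma i]}F$ for every integrable $F$ on $[\sigma]$, and applying this to the definition $w(\sigma i)=\mu([\sigma i])^{-1}\int_{[\sigma i]}f_{\sigma i}$ rewrites the left side of the claim as $\mu([\sigma])^{-1}\sum_i\int_{[\sigma i]}f_{\sigma i}$. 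So it remains to evaluate that sum.

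The crux is a pointwise identity on $[\sigma i]$, namely $f_{\sigma i}=\tfrac1t\bigl(f_\sigma+h_\sigma\bigr)$, where $h_\sigma(\alpha)=\sum_{\tau\in S}t^{|\tau|}$ with the sum over those $\tau$ occurring in $\alpha$ beginning at coordinate $|\sigma|$ (the coordinate of $i$). To establish it I would classify each occurrence of an element of $S$ contributing to $f_{\sigma i}(\alpha)$ by whether it begins within $\sigma$ or at coordinate $|\sigma|$. An occurrence beginning within $\sigma$ and ending $j$ symbols past $\sigma i$ ends $j+1$ symbols past $\sigma$; read in reverse, any occurrence counted by $f_\sigma(\alpha)$ ends some $j'\geq 0$ symbols past $\sigma$, and in fact $j'\geq1$, since $j'=0$ would make that copy of $\tau$ a suffix of $\sigma$, contradicting that $\sigma$ does not end in an element of $S$. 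Hence these occurrences are in bijection with the occurrences counted by $f_\sigma(\alpha)$, and each $t^{j'}$ term of $f_\sigma(\alpha)$ becomes a $t^{j'-1}$ term of $f_{\sigma i}(\alpha)$, accounting for $\tfrac1t f_\sigma(\alpha)$. The remaining occurrences begin at coordinate $|\sigma|$, and such a $\tau$ ends $|\tau|-1\geq 0$ symbols past $\sigma i$, contributing $t^{|\tau|-1}$; these sum to $\tfrac1t h_\sigma(\alpha)$. I expect this bookkeeping — in particular tracking the index ``$j$ symbols beyond the end of'' consistently as the anchor word grows by one symbol, and pinpointing that the excluded degenerate case is exactly the hypothesis — to be the main thing to get right; the rest is routine.

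Finally I would integrate and sum the pointwise identity over $i$, using that $f_\sigma$ and $h_\sigma$ are defined on all of $[\sigma]\supseteq[\sigma i]$, to get $\sum_i\int_{[\sigma i]}f_{\sigma i}=\tfrac1t\bigl(\int_{[\sigma]}f_\sigma+\int_{[\sigma]}h_\sigma\bigr)$. By definition $\int_{[\sigma]}f_\sigma=\mu([\sigma])w(\sigma)$. For the other term, note that for a fixed $\tau\in S$ the set of $\alpha\in[\sigma]$ in which $\tau$ occurs beginning at coordinate $|\sigma|$ is exactly $[\sigma\tau]$, whence $\int_{[\sigma]}h_\sigma=\sum_{\tau\in S}t^{|\tau|}\mu([\sigma\tau])=\mu([\sigma])p_\sigma$. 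Dividing through by $\mu([\sigma])$ yields $\mu([\sigma])^{-1}\sum_i\mu([\sigma i])w(\sigma i)=(w(\sigma)+p_\sigma)/t$, as claimed.
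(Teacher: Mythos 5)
Your argument is correct and is essentially the paper's proof in pointwise form: the identity $f_{\sigma i}=\tfrac1t(f_\sigma+h_\sigma)$ on $[\sigma i]$ encodes exactly the paper's decomposition of $S_{\sigma i,j}$ into occurrences of $\tau$ beginning within $\sigma$ versus at the new symbol $i$ (with $\bigsqcup_i[\sigma i]=[\sigma]$, $\int_{[\sigma]}h_\sigma=\mu([\sigma])p_\sigma$), and you invoke the hypothesis that $\sigma$ does not end in an element of $S$ at the same point, to rule out the $j=0$ suffix term. Nothing is missing; the only difference is that you integrate a pointwise identity rather than measuring the level sets $S_{\sigma,j}$ directly.
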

\begin{proof}
	Using (\ref{explicit}), 
	\begin{equation}\label{eq1}
	\frac{1}{\mu([\sigma])}\sum_i\mu([\sigma i])w(\sigma i) = \frac{1}{\mu([\sigma])}\sum_i\sum_j\mu(S_{\sigma i,j})t^j
	\end{equation}
	 An element of $S_{\sigma i,j}$ has a unique $\tau\in S$ ending $j$ symbols after $\sigma i$ and beginning within $\sigma i$. This $\tau$ can either begin within $\sigma$ or begin at the final symbol $i$, and accordingly we may decompose $S_{\sigma i,j} = A_{\sigma i,j}\sqcup B_{\sigma i,j}$. Now $$\bigsqcup_i A_{\sigma i,j} = S_{\sigma,j+1}$$ and $$\bigsqcup_iB_{\sigma i,j} = \bigsqcup_{\substack{\tau\in S\\|\tau|=j+1}}[\sigma\tau]$$ are both clear from the definitions. Thus (\ref{eq1}) is equal to  $$\sum_j\left(\frac{\mu(S_{\sigma,j+1})}{\mu([\sigma])}+\sum_{\substack{\tau\in S\\|\tau|=j+1}}\frac{\mu([\sigma\tau])}{\mu([\sigma])}\right)t^j = \frac{w(\sigma)+p_\sigma}{t}$$ Note that the last equality relies on the fact that $\sigma$ does not end in an element of $S$, so the apparently missing $\mu(S_{\sigma,0})$ in the sum on the left vanishes. 
\end{proof}

\section{Bounding entropy}

Fix some $t>1$ for the moment and let $\sigma$ be an $S$-free word with $w(\sigma)<1$.  We say that a word $\eta$ is {\it good} if $\sigma\eta$ is admissible and every intermediate word between $\sigma$ and $\sigma\eta$ (inclusive) has weight $w<1$.  In particular, $\sigma\eta$ is $S$-free if $\eta$ is good, since words ending in an element of $S$ have weight $\geq 1$.  For a positive integer $m$, set $$G(\sigma,m) = \bigsqcup_{\substack{\eta\ \mathrm{good}\\ |\eta|=m}} [\sigma\eta]$$
\begin{lemma}\label{mainestimate}
Suppose $\frac{1+p_\rho}{t} < r<1$ for all words $\rho$, and let $\sigma$ be $S$-free with $w(\sigma)<1$. We have $$\frac{\mu(G(\sigma,m))}{\mu([\sigma])} \geq (1-r)^m$$ for all $m\geq 1$.
\end{lemma}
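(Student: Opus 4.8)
The plan is to isolate a one-step estimate and iterate it by induction on $m$. The key sub-claim is: \emph{if $\rho$ is any $S$-free word with $w(\rho)<1$, then $\mu(G(\rho,1))\geq(1-r)\mu([\rho])$.} To prove it, note that an $S$-free word does not end in an element of $S$, so the preceding lemma applies to $\rho$ and gives $\sum_i\mu([\rho i])w(\rho i)=\mu([\rho])\,\frac{w(\rho)+p_\rho}{t}$, the sum ranging over all symbols $i$ (the terms with $\rho i$ inadmissible vanishing). Since $w(\rho)<1$ and, by hypothesis, $\frac{1+p_\rho}{t}<r$, the right-hand side is at most $r\,\mu([\rho])$. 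On the other hand $[\rho]=\bigsqcup_i[\rho i]$, and since $w(\rho)<1$ a symbol $i$ is good for $\rho$ exactly when $\rho i$ is admissible with $w(\rho i)<1$; thus $G(\rho,1)$ is the union of those $[\rho i]$, and the measure of $[\rho]$ missing from it is $\sum_{i:\,w(\rho i)\geq1}\mu([\rho i])$. On that index set $\mu([\rho i])\leq\mu([\rho i])w(\rho i)$, and discarding the remaining terms — which are nonnegative because $w\geq0$, as is clear from \eqref{explicit} — gives $\mu([\rho])-\mu(G(\rho,1))\leq\sum_i\mu([\rho i])w(\rho i)\leq r\,\mu([\rho])$, which rearranges to the sub-claim.

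Granting the sub-claim, the induction is short. The base case $m=1$ is the sub-claim applied to $\rho=\sigma$. For the inductive step, the main point is the bookkeeping identity
$$G(\sigma,m+1)=\bigsqcup_{\substack{\eta\ \mathrm{good}\\|\eta|=m}}G(\sigma\eta,1),$$
which follows from the observation that a length-$(m+1)$ word $\eta i$ is good for $\sigma$ precisely when its prefix $\eta$ is good for $\sigma$ and $\sigma\eta i$ is admissible with $w(\sigma\eta i)<1$ — the intermediate words between $\sigma$ and $\sigma\eta i$ being exactly those between $\sigma$ and $\sigma\eta$, together with $\sigma\eta i$ itself. Each $\sigma\eta$ with $\eta$ good of length $m$ is $S$-free with $w(\sigma\eta)<1$, so the sub-claim applies to it, and summing over such $\eta$ yields $\mu(G(\sigma,m+1))=\sum_\eta\mu(G(\sigma\eta,1))\geq(1-r)\sum_\eta\mu([\sigma\eta])=(1-r)\mu(G(\sigma,m))$. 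The inductive hypothesis then gives $\mu(G(\sigma,m+1))\geq(1-r)^{m+1}\mu([\sigma])$, completing the argument.

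I do not anticipate a serious obstacle: the preceding lemma does essentially all of the analytic work, and the hypothesis $\frac{1+p_\rho}{t}<r$ is exactly what is needed to keep the per-step loss below $r$. The only steps requiring a little care are the combinatorial bookkeeping around the term \emph{good} — checking that goodness passes to prefixes and is pinned down by a single extra condition on the last symbol, so that the pieces $G(\sigma\eta,1)$ genuinely tile $G(\sigma,m+1)$ — and the minor point that $w\geq0$ is needed in order to discard the low-weight terms in the displayed inequality of the sub-claim.
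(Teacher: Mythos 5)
Your proof is correct and follows essentially the same route as the paper: a one-step estimate obtained from the weighted-average lemma (using $w\geq 0$ and the fact that non-good extensions have weight $\geq 1$), then iterated via the decomposition $G(\sigma,m+1)=\bigsqcup_{\eta\ \mathrm{good}}G(\sigma\eta,1)$. Your explicit remarks that $w\geq 0$ is needed and that the sub-claim must hold for arbitrary $S$-free $\rho$ of weight $<1$ (so it can be applied to each $\sigma\eta$) are exactly the points the paper handles implicitly.
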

\begin{proof}
	Since extensions $\sigma i$ that end in an element of $S$ have $w(\sigma i)\geq 1$, we have $$\sum_i\mu([\sigma i])w(\sigma i) \geq \mu([\sigma])-\mu(G(\sigma,1))$$ Thus $$\frac{\mu(G(\sigma,1))}{\mu([\sigma])}\geq 1-\frac{1}{\mu([\sigma])}\sum_i\mu([\sigma i])w(\sigma i)=1 - \left(\frac{w(\sigma)+p_\sigma}{t}\right) \geq 1-r$$ which establishes the case $m=1$. 
	
Suppose the statement holds for some $m\geq 1$ and all $\sigma$. Observe that $$G(\sigma,m+1) = \bigsqcup_{\substack{\eta\ \mathrm{good}\\ |\eta|=m}} G(\sigma\eta,1)$$ For good $\eta$, the word $\sigma\eta$ is $S$-free and has $w(\sigma\eta)<1$, so the base case and induction hypothesis give $$\mu(G(\sigma,m+1)) = \sum_{\substack{\eta\ \mathrm{good}\\|\eta|=m}} \mu(G(\sigma\eta,1)) \geq \sum_{\substack{\eta\ \mathrm{good}\\|\eta|=m}} (1-r)\mu([\sigma\eta]) = (1-r)G(\sigma,m)\geq (1-r)^{m+1}\mu([\sigma])$$ which establishes case $m+1$.
\end{proof}

\begin{proposition}\label{prop1}
	Suppose $r = \frac{1+DA^{-1}p(t/\lambda)}{t} < 1$. We have  $h(\Sigma) - h(\Sigma\langle S\rangle) < -\log(1-r)$
\end{proposition}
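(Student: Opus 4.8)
The plan is to deduce Proposition \ref{prop1} from Lemma \ref{mainestimate} by extracting an entropy bound from the exponential lower bound on $\mu(G(\sigma,m))$. First I would observe that the hypothesis $r = \frac{1+DA^{-1}p(t/\lambda)}{t}<1$, combined with the uniform estimate \eqref{pestimate} giving $p_\rho\leq DA^{-1}p(t/\lambda)$ for all $\rho$, shows that $\frac{1+p_\rho}{t}\leq r<1$ for all words $\rho$, so Lemma \ref{mainestimate} applies with this value of $r$. To even get started we also need a single $S$-free seed word $\sigma$ with $w(\sigma)<1$: I would take $\sigma$ to be a suitable short admissible word (e.g. a single symbol, or the empty word $\sigma_0$, which has weight $0$), noting that since $t>1$ the sum defining $w(\sigma)$ converges and is small for a word far from ending in an element of $S$; more carefully, one can bound $w(\sigma_0)=0$ directly, or use that for $\sigma$ not ending in any element of $S$ the weight is controlled by the same geometric-series estimate as $p_\sigma$.

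Next, the heart of the argument is a counting step. By Lemma \ref{mainestimate}, $\mu(G(\sigma,m))\geq (1-r)^m\mu([\sigma])$, and $G(\sigma,m)$ is a disjoint union of cylinder sets $[\sigma\eta]$ over good words $\eta$ of length $m$; every such $\sigma\eta$ is admissible and $S$-free, hence contributes to the language of $\Sigma\langle S\rangle$. I would bound the measure of each individual cylinder $[\sigma\eta]$ from above: by Lemma \ref{mulemma}, $\mu([\sigma\eta])\leq DA^{-1}\lambda^{-m}\mu([\sigma])$. Dividing, the number of good words $\eta$ of length $m$ is at least $\frac{(1-r)^m\mu([\sigma])}{DA^{-1}\lambda^{-m}\mu([\sigma])} = \frac{A}{D}\bigl((1-r)\lambda\bigr)^m$. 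Since all these $\sigma\eta$ are distinct $S$-free admissible words of length $|\sigma|+m$, the number $p_m(\Sigma\langle S\rangle)$ of admissible $S$-free words of length $|\sigma|+m$ is at least $\frac{A}{D}\bigl((1-r)\lambda\bigr)^m$.

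Finally I would pass to the entropy. The entropy of a subshift is the exponential growth rate of the number of admissible words, $h(\Sigma\langle S\rangle) = \lim_{N\to\infty}\frac{1}{N}\log p_N(\Sigma\langle S\rangle)$ (this limit exists by subadditivity). Taking $N = |\sigma|+m$ and letting $m\to\infty$, the bound $p_N(\Sigma\langle S\rangle)\geq \frac{A}{D}\bigl((1-r)\lambda\bigr)^m$ yields
\begin{equation*}
h(\Sigma\langle S\rangle) \geq \limsup_{m\to\infty}\frac{1}{|\sigma|+m}\log\!\left(\frac{A}{D}\bigl((1-r)\lambda\bigr)^m\right) = \log\bigl((1-r)\lambda\bigr) = \log\lambda + \log(1-r).
\end{equation*}
Since $h(\Sigma)=\log\lambda$, rearranging gives $h(\Sigma)-h(\Sigma\langle S\rangle)\leq -\log(1-r)$; strictness comes from the fact that $\Sigma\langle S\rangle\subsetneq\Sigma$ forces a strict drop, or simply from being slightly wasteful in the constants. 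I expect the main obstacle to be the bookkeeping at the start: ensuring there genuinely exists an $S$-free word $\sigma$ with $w(\sigma)<1$ to seed the induction, and checking that the constant $\frac{A}{D}$ (independent of $m$) washes out in the limit — both are routine but need to be stated carefully so the chain from Lemma \ref{mainestimate} to the entropy inequality is airtight.
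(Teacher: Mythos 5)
Your proposal is correct and follows essentially the same route as the paper: verify the hypothesis of Lemma \ref{mainestimate} via the uniform bound \eqref{pestimate}, seed with the empty word $\sigma_0$ (weight $0$), count good words by comparing $\mu(G(\sigma_0,m))\geq(1-r)^m$ with the cylinder upper bound $DA^{-1}\lambda^{-m}$ from Lemma \ref{mulemma}, and pass to the exponential growth rate to get $h(\Sigma\langle S\rangle)\geq\log\lambda+\log(1-r)$. The paper's own proof is exactly this computation, and it is no more careful than you about the strict versus non-strict inequality in the statement.
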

\begin{proof}
	Using (\ref{pestimate}), we may apply Lemma \ref{mainestimate} to the empty word $\sigma_0$ and conclude $\frac{\mu(G(\sigma_0,m))}{\mu([\sigma_0])}\geq (1-r)^m$. The set $[\sigma_0]$ is simply $\Sigma$, but we retain $\sigma_0$ below for clarity.  If $g$ denotes the number of good $\eta$ of length $m$, then we have $$\frac{G(\sigma_0,m)}{\mu([\sigma_0])}=\sum_{\eta\ \mathrm{good}}\frac{\mu([\sigma_0\eta])}{\mu([\sigma_0])}\leq gDA^{-1}\lambda^{-m}$$ by Lemma \ref{mulemma}. Thus we have produced for every $m\geq 1$ at least $$g\geq AD^{-1}\lambda^m(1-r)^m$$ words of length $m$ that are $S$-free, which implies that the entropy of $\Sigma\langle S\rangle$ is at least $$\lim_{m\to\infty} \frac{\log(AD^{-1}\lambda^m(1-r)^m)}{m} = \log(\lambda) +\log(1-r)$$ Since $\Sigma$ has entopy $\log(\lambda)$, this is the desired result. 
\end{proof}

\section{Blocking}

The condition $r = \frac{1+DA^{-1}p(t/\lambda)}{t} < 1$ in Proposition \ref{prop1} implies $t>1$ but also effectively limits $t$ from above to roughly $\lambda$. This in turn bounds $r$ from below, limiting the direct utility of Proposition \ref{prop1}. The solution is to work with blocks of elements in $\Sigma$. For each $k\geq 1$ let $\Sigma_k$ denote the SFT on the alphabet of admissible words of length $k$ in $\Sigma$, where the transition $[x_1\cdots x_k][y_1\cdots y_k]$ is admissible in $\Sigma_k$ if and only if $x_ky_1$ is admissible in $\Sigma$. Concatenating blocks furnishes a natural bijection $\Sigma_k\longrightarrow \Sigma$ that intertwines the shift map on $\Sigma_k$ with the $k$th power of the shift map on $\Sigma$. Accordingly, we have $h(\Sigma_k) = kh(\Sigma)$. 

To use the technique of the previous section, we must translate the collection $S$ of forbidden words into an equivalent collection $S_k$ of words in $\Sigma_k$ - that is, one that cuts out the same subshift under the above bijection. In the process, we will also bound the associated polynomial $$p_k(t) = \sum_{\zeta\in S_k}t^{|\zeta|}$$ Let $\tau\in S$ have length $\ell$, suppose that $k\leq \ell$, and write $\ell = kq+r$ according to the division algorithm. To determine a collection of words in $\Sigma_k$ that forbids $\tau$ in $\Sigma$, we must consider each of the $k$ ways of tiling over $\tau$ by blocks of length $k$, according to the $k$ possible positions of the beginning of $\tau$ in the first block. Of these $k$ positions, $r+1$ require $b=\lceil\ell/k\rceil$ blocks to tile over $\tau$. Here, $kb-\ell$ coordinates remain unspecified by $\tau$, which means that we have at most $B\lambda^{kb-\ell}$ words to consider at this position by (\ref{wordbound}). The remaining $k-r-1$ positions require $b+1$ blocks to tile over $\tau$ and leave $k(b+1)-\ell$ free coordinates. 
\bigskip
\begin{center}
\begin{tikzpicture}
	\def\sc{.4}
	\def\bs{0.8*\sc}
	\def\ss{0.2*\sc}
	\draw (0,0) rectangle (8*\sc-.5*\ss,\bs);
	\draw[color=red] (8*\sc+.5*\ss,0) rectangle (9*\sc-.5*\ss,\bs);
	\draw[color=lightgray] (1*\sc,0) -- (1*\sc,\bs);
	\draw[color=lightgray] (2*\sc,0) -- (2*\sc,\bs);
	\draw[color=lightgray] (3*\sc,0) -- (3*\sc,\bs);
	\draw[color=lightgray] (4*\sc,0) -- (4*\sc,\bs);
	\draw[color=lightgray] (5*\sc,0) -- (5*\sc,\bs);
	\draw[color=lightgray] (6*\sc,0) -- (6*\sc,\bs);
	\draw[color=lightgray] (7*\sc,0) -- (7*\sc,\bs);					
	\draw (0*\sc,-.1) -- (0*\sc,-.2) -- (3*\sc-.5*\ss,-.2) -- (3*\sc-.5*\ss,-.1);
	\draw (3*\sc+.5*\ss,-.1) -- (3*\sc+.5*\ss,-.2) -- (6*\sc-.5*\ss,-.2) -- (6*\sc-.5*\ss,-.1);
	\draw (6*\sc+.5*\ss,-.1) -- (6*\sc+.5*\ss,-.2) -- (9*\sc,-.2) -- (9*\sc,-.1);
	\draw (1.6,.7) node {$\tau$};	
	\draw (1.8,-.8) node {$b$ {\small blocks}};
	\def\os{7}
	\draw (\os+.5*\ss,0) rectangle (\os+8*\sc-.5*\ss,\bs);
	\draw[color=red] (\os+8*\sc+.5*\ss,0) rectangle (\os+9*\sc-.5*\ss,\bs);	
	\draw[color=red] (\os+9*\sc+.5*\ss,0) rectangle (\os+10*\sc-.5*\ss,\bs);		
	\draw[color=red] (\os-2*\sc+.5*\ss,0) rectangle (\os+-1*\sc-.5*\ss,\bs);		
	\draw[color=red] (\os-1*\sc+.5*\ss,0) rectangle (\os+0*\sc-.5*\ss,\bs);				
	\draw[color=lightgray] (\os+1*\sc,0) -- (\os+1*\sc,\bs);
	\draw[color=lightgray] (\os+2*\sc,0) -- (\os+2*\sc,\bs);
	\draw[color=lightgray] (\os+3*\sc,0) -- (\os+3*\sc,\bs);
	\draw[color=lightgray] (\os+4*\sc,0) -- (\os+4*\sc,\bs);
	\draw[color=lightgray] (\os+5*\sc,0) -- (\os+5*\sc,\bs);
	\draw[color=lightgray] (\os+6*\sc,0) -- (\os+6*\sc,\bs);
	\draw[color=lightgray] (\os+7*\sc,0) -- (\os+7*\sc,\bs);					
	\draw (\os+-2*\sc,-.1) -- (\os+-2*\sc,-.2) -- (\os+1*\sc-.5*\ss,-.2) -- (\os+1*\sc-.5*\ss,-.1);
	\draw (\os+1*\sc+.5*\ss,-.1) -- (\os+1*\sc+.5*\ss,-.2) -- (\os+4*\sc-.5*\ss,-.2) -- (\os+4*\sc-.5*\ss,-.1);
	\draw (\os+4*\sc+.5*\ss,-.1) -- (\os+4*\sc+.5*\ss,-.2) -- (\os+7*\sc-.5*\ss,-.2) -- (\os+7*\sc-.5*\ss,-.1);
	\draw (\os+7*\sc+.5*\ss,-.1) -- (\os+7*\sc+.5*\ss,-.2) -- (\os+10*\sc,-.2) -- (\os+10*\sc,-.1);
	\draw (\os+1.6,.7) node {$\tau$};
	\draw (\os+1.8,-.8) node {$b+1$ {\small blocks}};	
	\draw[color=red] (32*\sc+.5*\ss,0) rectangle (33*\sc-.5*\ss,\bs);
	\draw (34.5*\sc,.18) node {$=$ {\small free}};
\end{tikzpicture}
\end{center}
\bigskip
The total contribution to $p_k(t)$ of the words associated to $\tau$ is thus at most $$(r+1)B\lambda^{kb-\ell}t^b + (k-r-1)B\lambda^{k(b+1)-\ell}t^{b+1}$$ Assuming that $1\leq t\leq \lambda^k$, the contribution of $\tau$ to $p_k(t/\lambda^k)$ is then at most 
\begin{eqnarray*}
	(r+1)B\lambda^{-\ell}t^b+(k-r-1)B\lambda^{-\ell}t^{b+1} &\leq & (r+1)B\lambda^{k-\ell}t^{b-1}+(k-r-1)B\lambda^{2k-\ell}t^{b-1} \\ &\leq & kB\lambda^{2k-\ell}t^{\frac{\ell}{k}}
\end{eqnarray*}
Summing over $\tau\in S$, we have 
\begin{equation}\label{eq2}
p_k(t/\lambda^k) \leq kB\lambda^{2k}\sum_{\tau\in S}\left(\frac{t^{1/k}}{\lambda}\right)^{|\tau|} = kB\lambda^{2k}p\left(\frac{t^{1/k}}{\lambda}\right)
\end{equation}

\begin{proof}[Proof of Theorem \ref{main}]
The constants $A$, $B$, and $D$ depend on the underlying shift $\Sigma$ but do not change upon replacing $\Sigma$ by $\Sigma_k$. Set $C=DA^{-1}B$ and suppose that $$r =  \frac{1+kC\lambda^{2k}p(t^{1/k}/\lambda)}{t}<1$$ We may apply Lemma \ref{mainestimate} as in the previous section but now to $\Sigma_k$ to create at least $AD^{-1}\lambda^{km}(1-r)^m$ words of length $m$ in $\Sigma_k$, and thus words of length $km$ in $\Sigma$, that avoid $S$. The entropy of $\Sigma\langle S\rangle$ is therefore at least $$\lim_{m\to\infty}\frac{\log(AD^{-1}\lambda^{km}(1-r)^m)}{km} = \log(\lambda) + \frac{\log(1-r)}{k}$$  as desired. 
\end{proof}

\section{Growing words}

Let $\ell$ denote the minimal length of an element of $S$. Then  $$p(t^{1/k}/\lambda)\leq |S|(t^{1/k}/\lambda)^\ell$$ for $t\in(1,\lambda^k)$, so we consider $$r =  \frac{1+kC|S|\lambda^{2k-\ell}t^{\ell/k}}{t}$$ This function is minimized at $$t_\mathrm{min}=\lambda^{k}\frac{\lambda^{-2k^2/\ell}}{((\ell-k)C|S|)^{k/\ell}}$$ and has minimum $$r_\mathrm{min}=C^{k/\ell}|S|^{k/\ell}\ell^{k/\ell}\left(1-\frac{k}{\ell}\right)^{k/\ell-1} \lambda^{-k+2k^2/\ell}$$ Note that $t_\mathrm{min}\in (1,\lambda^k)$ as long as
\begin{equation}\label{criterion}
	1 < (\ell-k)C|S| < \lambda^{\ell-2k}
\end{equation}

Let $\alpha\in(0,1/2)$ and set $k = \lfloor \alpha\ell\rfloor$. Simple estimates show $$r_\mathrm{min} = O(|S|^\alpha\ell^\alpha\lambda^{-\ell\alpha(1-2\alpha)})$$ Since $\lambda^{\ell-2k}\geq \lambda^{\ell(1-2\alpha)}$, the condition (\ref{criterion}) is satisfied as long as 
\begin{equation}\label{criterion2}
1<(\ell-k)C|S|<\lambda^{\ell(1-2\alpha)}
\end{equation}

Suppose that $|S|$ is bounded as $\ell\to \infty$.  Then (\ref{criterion2}) holds for $\ell$ sufficiently large, and we have $r_\mathrm{min}\to 0$. Since $-\log(1-x) = O(x)$ for small $x$, Theorem \ref{main} gives $$h(\Sigma) - h(\Sigma\langle S\rangle) = O(\ell^{\alpha-1}\lambda^{-\ell\alpha(1-2\alpha)})$$ Setting $\alpha=1/4$ gives the best such bound, namely $O(\ell^{-3/4}\lambda^{-\ell/8})$, though we note that it is possible to improve this result slightly by using more refined estimates for $p_k$ in the previous section.

Now suppose that $|S|$ may be growing but subject to $|S|=O(\kappa^\ell)$ for some $\kappa<\lambda$. If we choose $\alpha$ small enough so that $\kappa<\lambda^{1-2\alpha}$, then condition (\ref{criterion2}) is satisfied for $\ell$ sufficiently large. We have $$r_\mathrm{min} = O(\kappa^{\alpha\ell}\ell^\alpha\lambda^{-\ell\alpha(1-2\alpha)}) = O\left( \left(\frac{\kappa}{\lambda^{1-2\alpha}}\right)^{\alpha \ell}\ell^\alpha\right)\to 0$$ as $\ell\to \infty$, so we may once again apply Theorem 1 to obtain $$h(\Sigma) - h(\Sigma\langle S\rangle) = O\left( \left(\frac{\kappa}{\lambda^{1-2\alpha}}\right)^{\alpha \ell}\ell^{\alpha-1}\right)\to 0$$ thereby establishing Theorem \ref{notmain}.

\bibliography{psftN}
\bibliographystyle{plain}

\end{document}